\newcommand{\Hg}{\mathscr{H}}
\newcommand{\CC}{\mathbb{C}}
\newtheorem{theorem}{Theorem}
\theoremstyle{definition}
\title[Unitary discrete Hilbert transforms]
{Unitary discrete Hilbert transforms}
\author [Yurii Belov]{Yurii Belov }
\address{Department of Mathematical Sciences\\
Norwegian University of Science and Technology (NTNU)\\
 NO- 7491 Trondheim, Norway}
 \email{j\_b\_juri\_belov@mail.ru}
\author [Tesfa Y. Mengestie]{Tesfa Y. Mengestie }
\address{Department of Mathematical Sciences\\
Norwegian University of Science and Technology (NTNU)\\
 NO- 7491 Trondheim, Norway}
\email{mengesti@math.ntnu.no}
\author[Kristian Seip]{Kristian Seip}
\address{Department of Mathematical Sciences\\
Norwegian University of Science and Technology (NTNU)\\
 NO- 7491 Trondheim, Norway}
\email{seip@math.ntnu.no}
\thanks{The authors are supported by the Research Council of
Norway grant 185359/V30.} \subjclass[2000]{30H10, 46E22}
\begin{document}
\begin{abstract}
Weighted discrete Hilbert transforms $(a_n)_n \mapsto \big(\sum_n
a_n v_n/(\lambda_j-\gamma_n)\big)_j$ from $\ell^2_v$ to $\ell^2_w$
are considered, where $\Gamma=(\gamma_n)$ and $\Lambda=(\lambda_j)$
are disjoint sequences of points in the complex plane and $v=(v_n)$
and $w=(w_j)$ are positive weight sequences. It is shown that if
such a Hilbert transform is unitary, then $\Gamma\cup\Lambda$ is a
subset of a circle or a straight line, and a description of all
unitary discrete Hilbert transforms is then given. A
characterization of the orthogonal bases of reproducing kernels
introduced by L. de Branges and D. Clark is implicit in these
results: If a Hilbert space of complex-valued functions defined on a
subset of $\CC$ satisfies a few basic axioms and has more than one
orthogonal basis of reproducing kernels, then these bases are all of
Clark's type.
\end{abstract}
\maketitle

\section{Introduction}

If we are given two finite or infinite sequences of distinct points
$\Gamma=(\gamma_n)$ and $\Lambda=(\lambda_j)$ in $\CC$ and a
sequence of positive numbers $v=(v_n)$, we may define the discrete
Hilbert transform by
\begin{equation} (a_n)_n \mapsto \left(\sum_n \frac{a_n
v_n}{\lambda_j-\gamma_n} \right)_j. \label{discreteH}
\end{equation}
To make sense of this, we assume that $\Gamma$ and $\Lambda$, viewed
as subsets of $\CC$, are disjoint. We also assume that $\Lambda$ is
a subset of the set
\[ (\Gamma, v)^{*}=\left\{ z\in \CC:\
\sum_n\frac{v_n}{|z-\gamma_n|^2}<\infty\right\} \] because we wish
to define the discrete Hilbert transform for sequences $(a_n)$ in
\[ \ell^2_v=\{ (a_n):\ \sum_n |a_n|^2 v_n<\infty\}.\]
If we now associate another weight sequence $w=(w_j)$ with
$\Lambda$, we may ask: When is the discrete Hilbert transform
$H_v(\Gamma,\Lambda)$ given by \eqref{discreteH} a unitary
transformation from $\ell^2_v$ to $\ell^2_w$?

The present note answers this question and shows how the solution
translates into a general statement about orthogonal bases of
reproducing kernels. Making a few minimal assumptions on the
underlying Hilbert space, we arrive at the following conclusion:
There are no other orthogonal bases of reproducing kernels than
those introduced and studied by L. de Branges \cite{DB} and D. Clark
\cite{CL}.

\section{Localization of the sequences $\Gamma$ and $\Lambda$}

Our starting point is the following localization result.

\begin{theorem}
If the discrete Hilbert transform
\[H_v(\Gamma,\Lambda):\ \ell^2_v\to \ell^2_w\]
is unitary, then $\Gamma\cup\Lambda$ is a subset of a circle or a
straight line in $\CC$.
\end{theorem}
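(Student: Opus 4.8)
The plan is to reformulate unitarity as a statement about the inverse of a Cauchy matrix, from which concyclicity can be read off. The substitution $b_n=a_n\sqrt{v_n}$ identifies $\ell^2_v$ isometrically with $\ell^2$; doing the same with $(w_j)$ on the target side, $H_v(\Gamma,\Lambda)$ becomes the operator on $\ell^2$ with matrix $M_{jn}=\sqrt{w_jv_n}/(\lambda_j-\gamma_n)$, so $H_v(\Gamma,\Lambda)$ is unitary exactly when $M$ is a unitary matrix. Writing $C$ for the Cauchy matrix $C_{jn}=1/(\lambda_j-\gamma_n)$ and $D_v=\diag(v_n)$, $D_w=\diag(w_j)$, we have $M=D_w^{1/2}CD_v^{1/2}$, so unitarity is equivalent to the pair of relations $C^{*}D_wC=D_v^{-1}$ and $CD_vC^{*}=D_w^{-1}$. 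In particular $\Gamma$ and $\Lambda$ have the same cardinality, and when it is finite, $C$ is invertible with $C^{-1}=D_vC^{*}D_w$, that is,
\[
(C^{-1})_{nj}=\frac{v_nw_j}{\overline{\lambda_j-\gamma_n}}.
\]

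Next I would invoke the classical formula for the inverse of a finite Cauchy matrix, which has the shape $(C^{-1})_{nj}=\alpha_n\beta_j/(\lambda_j-\gamma_n)$ with \emph{nonzero} scalars $\alpha_n$ depending only on $n$ and $\beta_j$ depending only on $j$ (explicitly $\alpha_n=-q(\gamma_n)/p'(\gamma_n)$, $\beta_j=p(\lambda_j)/q'(\lambda_j)$, where $p(z)=\prod_n(z-\gamma_n)$ and $q(z)=\prod_j(z-\lambda_j)$). Comparing this with the expression above forces
\[
\frac{\lambda_j-\gamma_n}{\overline{\lambda_j-\gamma_n}}=a_nb_j\qquad\text{for all }n,j,
\]
with $a_n=\alpha_n/v_n$ and $b_j=\beta_j/w_j$; this factorization of the unimodular quantity on the left is the heart of the matter.

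From the factorization the localization follows quickly. Writing $\lambda_j-\gamma_n=r_{jn}e^{i\theta_{jn}}$, we get $e^{2i\theta_{jn}}=a_nb_j$, hence $\theta_{jn}\equiv\tfrac12(\arg a_n+\arg b_j)\pmod\pi$, so that for all $n\neq n'$ and $j\neq j'$
\[
\arg\frac{(\lambda_j-\gamma_n)(\lambda_{j'}-\gamma_{n'})}{(\lambda_{j'}-\gamma_n)(\lambda_j-\gamma_{n'})}=\theta_{jn}-\theta_{j'n}-\theta_{jn'}+\theta_{j'n'}\equiv0\pmod\pi .
\]
Thus the cross-ratio of the four distinct points $\gamma_n,\gamma_{n'},\lambda_j,\lambda_{j'}$ is real, i.e.\ they lie on a common circle or line. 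Finally, fixing $\gamma_1\neq\gamma_2$ in $\Gamma$ and $\lambda_1\neq\lambda_2$ in $\Lambda$ and letting $\mathcal C$ be the unique circle-or-line through $\gamma_1,\gamma_2,\lambda_1$, one has $\lambda_2\in\mathcal C$; then every further $\gamma\in\Gamma$ lies on the circle-or-line through $\gamma_1,\lambda_1,\lambda_2$ and every further $\lambda\in\Lambda$ on the one through $\gamma_1,\gamma_2,\lambda_1$, both of which coincide with $\mathcal C$. Hence $\Gamma\cup\Lambda\subset\mathcal C$ (the cases of cardinality $\leq2$ being immediate).

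The main obstacle is the case of infinite $\Gamma$ and $\Lambda$, where there is no polynomial formula for $C^{-1}$ and the diagonal operators $D_v^{-1}$, $D_w^{-1}$, $\diag(\gamma_n)$, $\diag(\lambda_j)$ need not be bounded, so the computation above is only formal. The goal would be to recover the same factorization $(\lambda_j-\gamma_n)/\overline{(\lambda_j-\gamma_n)}=a_nb_j$ directly from the relations $C^{*}D_wC=D_v^{-1}$, $CD_vC^{*}=D_w^{-1}$ — note these already force $u_n:=(\sqrt{w_j}/(\lambda_j-\gamma_n))_j\in\ell^2$ since $\|u_n\|^2=1/v_n<\infty$, so every series that occurs converges absolutely — after which the previous paragraph applies verbatim. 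A natural route is the entrywise resolvent identity $\lambda_jM_{jn}-\gamma_nM_{jn}=\sqrt{w_jv_n}$, which says that $\diag(\lambda_j)\,M-M\,\diag(\gamma_n)$ is the rank-one matrix $\xi\eta^{*}$ with $\xi=(\sqrt{w_j})_j$, $\eta=(\sqrt{v_n})_n$; multiplying by $M^{-1}=M^{*}$ would present $\diag(\gamma_n)$ as a rank-one perturbation of the normal operator $M^{*}\diag(\lambda_j)M$, and normality of such a perturbation is extremely restrictive. The difficulty — which I expect to be the real work — is to make this rigorous: to supply dense domains on which the unbounded operators and the possibly non-$\ell^2$ vectors $\xi,\eta$ are legitimate, or else to reduce the infinite configuration to finite ones by an exhaustion argument.
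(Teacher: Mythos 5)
There is a genuine gap, and you have correctly located it yourself: your argument is complete only when $\Gamma$ and $\Lambda$ are finite, because the explicit inverse of a Cauchy matrix via the polynomials $p(z)=\prod_n(z-\gamma_n)$, $q(z)=\prod_j(z-\lambda_j)$ has no counterpart for infinite sequences. The theorem's content lies precisely in the infinite case (the paper's sequences are in general infinite, subject only to $\Lambda\subset(\Gamma,v)^*$), and the routes you sketch for it are problematic: a finite exhaustion does not work because a finite submatrix of a unitary Cauchy-type matrix is not unitary, so the finite-case identity $C^{-1}=D_vC^{*}D_w$ is not inherited by truncations; and in the rank-one-perturbation approach the vectors $\xi=(\sqrt{w_j})_j$, $\eta=(\sqrt{v_n})_n$ genuinely fail to lie in $\ell^2$ in natural examples (the standing hypothesis is only $\sum_n v_n/(1+|\gamma_n|^2)<\infty$), and $\diag(\gamma_n)$, $\diag(\lambda_j)$ are unbounded, so that plan requires substantial new work that you have not supplied.

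The gap is fillable, and it is worth noting that your key identity
\[
\frac{\lambda_j-\gamma_n}{\overline{\lambda_j-\gamma_n}}=a_nb_j
\]
is exactly what the paper extracts, but by a soft Hilbert-space argument that costs nothing in the infinite case: since the images $H_v(\Gamma,\Lambda)e^{(k)}$ of the standard basis vectors form an orthonormal \emph{basis} of $\ell^2_w$, one checks (by subtracting the series defining $G(z)$ and $G(\gamma_n)$ for the function $G(z)=(z-\gamma_m)\sum_j w_j\big((\overline{\lambda_j}-\overline{\gamma_m})(\lambda_j-z)\big)^{-1}$, which vanishes on $\Gamma$) that the sequence $\Big(\frac{\lambda_j-\gamma_m}{\overline{\lambda_j}-\overline{\gamma_m}}\cdot\frac{1}{\lambda_j-\gamma_n}\Big)_j$ is orthogonal in $\ell^2_w$ to every $H_v(\Gamma,\Lambda)e^{(k)}$ with $k\neq n$, hence is a scalar multiple of $\big(1/(\overline{\lambda_j}-\overline{\gamma_n})\big)_j$. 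That proportionality says $\frac{(\lambda_j-\gamma_m)(\overline{\lambda_j-\gamma_n})}{(\overline{\lambda_j-\gamma_m})(\lambda_j-\gamma_n)}$ is independent of $j$, which is your factorization in disguise; from there your cross-ratio computation and the final three-points-determine-the-circle step coincide with the paper's. So the endgame of your proposal is sound and essentially identical to the published argument; what is missing is the derivation of the factorization by means that survive passage to infinitely many points, and the Cauchy-inverse formula is not such a means.
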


\begin{proof}
In what follows, we let $e^{(n)}$ denote the vectors in the standard
orthonormal basis for $\ell^2_v$. Thus $e^{(n)}$ is the sequence for
which the $n$-th entry is $v_n^{-1/2}$ and all the other entries are
$0$.

We fix an index $m$ and observe that since $\Gamma$ is a subset of
$(\Lambda,w)^*$, the function
\[ G(z)=
(z-\gamma_m)\sum_{j}
\frac{w_j}{(\overline{\lambda_j}-\overline{\gamma_m}) (\lambda_j-z)}
\] is well defined for $z$ in $\Gamma$. In fact, since $H_v(\Gamma,\Lambda)$ is assumed
to be a unitary transformation, the basis vectors $e^{(n)}$ map into
an orthonormal system in $\ell_w^2$, and therefore $G$ vanishes on
$\Gamma$. Thus we may write
\[ G(z)=G(z)-G(\gamma_n)=(z-\gamma_n)\sum_{j}
\frac{w_j(\lambda_j-\gamma_m)}{(\overline{\lambda_j}-\overline{\gamma_m})
(\lambda_j-\gamma_n)(\lambda_j-z)},\] where on the right-hand side
we have just subtracted the respective series that define $G(z)$ and
$G(\gamma_n)$. It follows that
\[ \frac{G(z)}{z-\gamma_n}=\sum_{j}
\frac{w_j(\lambda_j-\gamma_m)}{(\overline{\lambda_j}-\overline{\gamma_m})
(\lambda_j-\gamma_n)(\lambda_j-z)}, \] and this function vanishes
for $z$ in $\Gamma\setminus\{\gamma_n\}$. Since
$H_v(\Gamma,\Lambda)$ is
 assumed to be unitary, the vectors $H_v(\Gamma,\Lambda) e^{(n)}$
constitute an orthonormal basis for $\ell^2_w$, and therefore the
sequence
\[
\left(\frac{\lambda_j-\gamma_m}{\overline{\lambda_j}-\overline{\gamma_m}}\cdot
\frac{1}{\lambda_j-\gamma_n}\right)_j
\]
is a multiple of the sequence
$(1/(\overline{\lambda_j}-\overline{\gamma_n}))_j$. Thus the complex
numbers $(\lambda_j-\gamma_m)^2/(\lambda_j-\gamma_n)^2$ have the
same argument for all $j$, and so
\[
\left(\frac{(\lambda_j-\gamma_m)(\lambda_l-\gamma_n)}
{(\lambda_j-\gamma_n)(\lambda_l-\gamma_m)}\right)^2>0
\]
for $j\neq l$ and $m\neq n$. In other words, the cross ratio of the
four complex numbers $\lambda_j$, $\lambda_l$, $\gamma_n$,
$\gamma_m$ is real. This can only happen if the points lie on the
same circle or straight line.

After having applied this argument to four arbitrary points, say
$\lambda_1$, $\lambda_2$, $\gamma_1$, $\gamma_2$, we see that in
fact every point from $\Gamma\cup\Lambda$ lie on the circle or
straight line determined by the four initial points, because we may
apply the same argument to any given point in $\Gamma\cup\Lambda$
along with three of the points $\lambda_1$, $\lambda_2$, $\gamma_1$,
$\gamma_2$.
%(Indeed, we may use the fact that the cross ratio is invariant
%under M\"{o}bius transformations and therefore assume that
%$\lambda_j=\infty$, $\lambda_l=0$, and $\gamma_n=1$, which imply
%that $\gamma_m$ is a real number.)
\end{proof}

\section{The unitary transformations associated with $\Gamma$ and $v$}

For a given sequence $\Gamma$ being a subset of a circle or straight
line and an associated weight sequence $v$, we wish to describe
those pairs $\Lambda$ and $w$ such that $H_v(\Gamma,\Lambda): \
\ell^2_v\to\ell^2_w$ is a unitary transformation. To begin with, we
require the admissibility condition
\begin{equation}
\sum_n \frac{v_n}{1+|\gamma_n|^2}<\infty, \label{admissible}
\end{equation}
which is now a necessary and sufficient condition for
$(\Gamma,v)^{*}$ to be nonempty; we will say that $(\Gamma,v)$ is an
admissible pair whenever \eqref{admissible} holds.

We will assume that $\Gamma$ is a subset of the real line. The case
when $\Gamma$ is a subset of a circle is completely analogous, as
will be briefly commented on at the end of this section. We set
\begin{equation} \label{phi}\varphi(z)=\sum_n
v_n\left(\frac{1}{\gamma_n-z}-\frac{\gamma_n}{1+\gamma_n^2}\right)
\end{equation}
and observe that $\varphi$ is well-defined on $(\Gamma,v)^*$ because
the series in \eqref{phi} converges absolutely for $z$ in
$(\Gamma,v)^*$. We also note that $\varphi$ is a Herglotz function
in the upper half-plane (\cite{CY}, Chapter 9 ). In particular, it
is a holomorphic function whose imaginary part is positive. A
general Herglotz function $\psi$ in the upper half-plane can be
written as \[ \psi(z)=b+ c z + \int_{-\infty}^\infty
\left(\frac{1}{t-z}-\frac{t}{1+t^2}\right)d\mu(t), \] where $b$ is a
real constant, $c$ a positive constant, and $\mu$ a nonnegative
measure on the real line such that
\[ \int_{-\infty}^\infty \frac{d\mu(t)}{1+t^2}<\infty. \]   We will
say that $\psi$ is a purely atomic Herglotz function if $c=0$ and
$\mu$ is a purely atomic measure; our function $\varphi$ is thus an
example of a purely atomic Herglotz function.

For every real number $\alpha$, we set
\[ \Lambda(\alpha)=\{\lambda\in (\Gamma,v)^*: \ \varphi(\lambda)=\alpha\}.
 \]
We observe that \begin{equation} \label{ortho} \sum_n \frac{v_n
(z-w)}{(w-\gamma_n)(z-\gamma_n)}=
\varphi(z)-\varphi(w),\end{equation} which implies that the
sequences $(1/(\lambda-\gamma_n))_n$ with $\lambda$ in
$\Lambda(\alpha)$ constitute an orthogonal set in $\ell^2_v$. This
means that $\Lambda(\alpha)$ is at most a countable set, so that we
may associate with $\Lambda(\alpha)$ a weight sequence
$w(\alpha)=(w_j)$, where
\begin{equation}
w_j=\left(\sum_{n}\frac{v_n}{(\lambda_j-\gamma_n)^2}\right)^{-1}
\label{weight}
\end{equation}
for $\lambda_j$ in $\Lambda(\alpha)$. It is implicit in our
arguments that if $H_v(\Gamma,\Lambda): \ell^2_v \to \ell_w^2$ is a
unitary transformation, then $\Lambda=\Lambda(\alpha)$ and
$w=w(\alpha)$ for some real number $\alpha$.

We will now prove the following theorem.

\begin{theorem}
Let $(\Gamma, v)$ be an admissible pair with $\Gamma$ a subset of
the real line, and let $\alpha$ be a real number. Then the discrete
Hilbert transform
\[H_v(\Gamma,\Lambda(\alpha)):\ell^2_v \to \ell^2_{w(\alpha)}\]
is unitary if and only if $(\alpha-\varphi(z))^{-1}$ is a purely
atomic Herglotz function. \label{theorem2}
\end{theorem}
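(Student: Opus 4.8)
The plan is to recast unitarity in terms of the Herglotz function $\Psi(z):=(\alpha-\varphi(z))^{-1}$ and then exploit the rigidity of Herglotz functions. First note that $\Psi$ is well defined and Herglotz, since $\varphi-\alpha$ is a nonconstant Herglotz function and hence $-(\varphi-\alpha)^{-1}$ is one too, and that $\Lambda(\alpha)$ lies on the real line, because a Herglotz function takes no real value off $\R$. With respect to the standard orthonormal bases, $H_v(\Gamma,\Lambda(\alpha))$ has matrix entries $\sqrt{v_nw_j}/(\lambda_j-\gamma_n)$; by \eqref{ortho} with $z=\lambda_j$, $w=\lambda_k$ (so that $\varphi(\lambda_j)=\varphi(\lambda_k)=\alpha$) and by \eqref{weight}, its rows form an orthonormal system, so $H_v(\Gamma,\Lambda(\alpha))$ is \emph{always} a co-isometry -- onto, with isometric adjoint. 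Hence it is unitary precisely when it is also an isometry, i.e.\ when its columns are orthonormal; and by the identity obtained from \eqref{ortho} upon interchanging the roles of $(\Gamma,v,\varphi)$ and $(\Lambda(\alpha),w(\alpha),F)$, where $F$ is the function attached to $(\Lambda(\alpha),w(\alpha))$ by \eqref{phi} -- the renormalized Cauchy transform of $\sum_jw_j\delta_{\lambda_j}$ -- column orthonormality amounts to the requirement that $F$ be constant on $\Gamma$ and satisfy $F'(\gamma_n)=v_n^{-1}$ for every $n$. This is the working reformulation of the theorem.

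The passage between $\Psi$ and $F$ rests on two elementary reciprocity facts for Herglotz functions. Because $\varphi$ has a simple pole of mass $v_n$ at each $\gamma_n$, its reciprocal $\Psi=(\alpha-\varphi)^{-1}$ vanishes at $\gamma_n$ with $\Psi'(\gamma_n)=v_n^{-1}$ (in the nontangential sense). Conversely, each $\lambda_j$ is a simple pole of $\Psi$ of mass $\varphi'(\lambda_j)^{-1}=w_j$ by \eqref{weight}, and $\Psi$ has no other poles, since a pole of $\Psi$ at a real point forces $\varphi$ to have there the finite value $\alpha$ with finite positive derivative, i.e.\ that point lies in $\Lambda(\alpha)$.

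For the ``if'' direction I would assume $\Psi$ purely atomic. Then, by the previous paragraph, the representing measure of $\Psi$ is exactly $\sum_jw_j\delta_{\lambda_j}$ and $\Psi$ has no linear term, so $\Psi-F$ is a real constant $b$; evaluating at $\gamma_n$ and using $\Psi(\gamma_n)=0$ and $\Psi'(\gamma_n)=v_n^{-1}$ gives $F\equiv-b$ on $\Gamma$ and $F'(\gamma_n)=v_n^{-1}$, which is precisely the working reformulation, so $H_v(\Gamma,\Lambda(\alpha))$ is unitary. For the ``only if'' direction I would assume $H_v(\Gamma,\Lambda(\alpha))$ unitary, so that $F$ is constant $=\beta$ on $\Gamma$ with $F'(\gamma_n)=v_n^{-1}$, and set $G:=(\beta-F)^{-1}$, again a Herglotz function. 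Since $F(z)=\beta+v_n^{-1}(z-\gamma_n)+\cdots$ near $\gamma_n$, $G$ has there a simple pole of mass $v_n$; thus $\mu_G$ dominates $\mu_\varphi$, $G$ has no linear term, and $R:=G-\varphi$ is a Herglotz function. At each $\lambda_j$ the pole of $F$ gives $G(\lambda_j)=0$, while $\varphi(\lambda_j)=\alpha$; and from the simple pole of $F$ of mass $w_j$ at $\lambda_j$ together with $\varphi'(\lambda_j)=w_j^{-1}$ (again \eqref{weight}) one gets $G'(\lambda_j)=w_j^{-1}=\varphi'(\lambda_j)$. Hence $R(\lambda_j)=-\alpha$ and $R'(\lambda_j)=0$. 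But a nonconstant Herglotz function has strictly positive derivative at every real point at which its (nontangential) derivative is finite; therefore $R\equiv-\alpha$, i.e.\ $(\beta-F)^{-1}=\varphi-\alpha$, whence $\Psi=(\alpha-\varphi)^{-1}=F-\beta$ is purely atomic, since $F$ is.

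The main obstacle is the reciprocity bookkeeping of the second paragraph: establishing that the poles of $\Psi$ are exactly the points of $\Lambda(\alpha)$, with masses $w_j$, that $\Psi$ vanishes at each $\gamma_n$ with $\Psi'(\gamma_n)=v_n^{-1}$, and that these assertions -- which concern nontangential (Julia--Carath\'eodory) boundary behavior -- survive the possible accumulation of the $\gamma_n$ among themselves or of the $\lambda_j$ near a $\gamma_n$. Tied to it is the admissibility point that $\Gamma\subset(\Lambda(\alpha),w(\alpha))^{*}$, needed for $F$ and $F'$ to make sense on $\Gamma$: this is built into the hypothesis that $H_v(\Gamma,\Lambda(\alpha))$ maps into $\ell^2_{w(\alpha)}$ in the ``only if'' direction, and follows from the finiteness of $\Psi'(\gamma_n)$ in the ``if'' direction. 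Once these are in place, what remains is just the rigidity of Herglotz functions. The case in which $\Gamma$ lies on a circle reduces to the above by a Cayley transform, as indicated in the paper.
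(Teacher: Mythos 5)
Your proposal is correct. Its first half coincides with the paper's proof: the reduction of unitarity to column orthonormality (the rows being orthonormal automatically by \eqref{ortho} and \eqref{weight}, so that $H_v(\Gamma,\Lambda(\alpha))$ is always a co-isometry), and the ``if'' direction, which in both treatments comes down to identifying the representing measure of $\Psi=(\alpha-\varphi)^{-1}$ with $\sum_j w_j\delta_{\lambda_j}$ and reading off that $F$ is constant on $\Gamma$ with $F'(\gamma_n)=v_n^{-1}$. Where you genuinely diverge is the ``only if'' direction. The paper argues by contradiction: it writes the general representation \eqref{nonatom} of $\Psi$ with a possible linear term $cz$ and a residual measure $\mu$, and shows, by taking imaginary parts along $z=\gamma_n+iy$, that the normalization $\|H_v(\Gamma,\Lambda(\alpha))e^{(n)}\|_{w(\alpha)}=1$ forces $c=0$ and $\mu=0$. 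You instead argue forward from unitarity: you form $G=(\beta-F)^{-1}$, note that $R=G-\varphi$ is Herglotz because $\mu_G\ge\sum_n v_n\delta_{\gamma_n}=\mu_\varphi$, and conclude $R\equiv-\alpha$ from the rigidity fact that a Herglotz function whose angular derivative vanishes at a single real point is constant; this yields $\Psi=F-\beta$ directly. Both routes rest on the same Julia--Carath\'eodory reciprocity computations, which you correctly isolate as the technical core and which the paper establishes by exactly the vertical-limit, imaginary-part arguments you anticipate; your version never needs the full Herglotz representation of $\Psi$, while the paper's disposes of $c$ and $\mu$ in one stroke once \eqref{nonatom} is written down. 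One minor blemish: your parenthetical claim that ``$G$ has no linear term'' is neither needed for $R$ to be Herglotz (a term $c_G z$ only helps the positivity of the imaginary part) nor justified at that stage of your argument, but nothing depends on it.
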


\begin{proof}
In this proof, we will again use the standard orthonormal basis
vectors $e^{(n)}$ in $\ell^2_v$; we will denote the corresponding
basis vectors in $\ell^2_{w(\alpha)}$ by $f^{(j)}$. We will use the
notation $\|\cdot \|_v$ and $\| \cdot\|_w$ for the respective norms
in $\ell^2_v$ and $\ell^2_w$.

It is clear that the adjoint transformation to
$H_v(\Gamma,\Lambda(\alpha))$ is again a discrete Hilbert transform.
In fact, since $\Gamma$ and $\Lambda(\alpha)$ are sequences of real
numbers, we have
$H_v^*(\Gamma,\Lambda(\alpha))=-H_{w(\alpha)}(\Lambda,\Gamma)$,
where
\[H_{w(\alpha)}(\Lambda(\alpha),\Gamma):\ell^2_{w(\alpha)}\to \ell^2_v.\]
Therefore, $H_v(\Gamma,\Lambda(\alpha))$ is unitary if and only if
both $H_v(\Gamma,\Lambda(\alpha))$ and
$H_{w(\alpha)}(\Lambda(\alpha), \Gamma)$ are isometric. Hence it
suffices to check whether $(H_v(\Gamma,\Lambda(\alpha)) e^{(n)})$
and $(H_{w(\alpha)}(\Lambda(\alpha),\Gamma)f^{(j)})$ are orthonormal
sequences in respectively $\ell^2_{w(\alpha)}$ and $\ell^2_v$.

The orthogonality of the vectors
$H_{w(\alpha)}(\Lambda(\alpha),\Gamma)f^{(j)}$ in $\ell^2_v$ has
already been verified (see \eqref{ortho}); it is just a consequence
of the definition of $\Lambda(\alpha)$. Likewise, we have
automatically
\[ \| H_{w(\alpha)}(\Lambda(\alpha),\Gamma)f^{(j)}\|_{v}^2
= \sum_n \frac{w_j v_n}{|\gamma_n-\lambda_j|^2} =1.\] So our task is
to show that $(H_v(\Gamma,\Lambda) e^{(n)})$ is an orthonormal
sequence in $\ell^2(\Lambda(\alpha),w(\alpha))$ if and only if
$(\alpha-\varphi(z))^{-1}$ is a purely atomic Herglotz function.

We first assume that $(\alpha-\varphi(z))^{-1}$ is indeed a purely
atomic Herglotz function. It suffices to show that there is a real
constant $b$ such that
\begin{equation}
\label{toprove} \frac{1}{\alpha-\varphi(z)}=b+ \sum_jw_j
\left(\frac{1}{\lambda_j-z}-\frac{\lambda_j}{1+\lambda_j^2}\right),
\end{equation}
where $\lambda_j$ are the points in $\Lambda(\alpha)$ and $w_j$ are
as in \eqref{weight}. Indeed, by symmetry, it will then follow that
the numbers $\gamma_n$ are solutions to the equation
\[\sum_j w_j \left(\frac{1}{\lambda_j-z}-\frac{\lambda_j}{1+\lambda_j^2}\right)=-b,\]
so that the arguments already employed for the vectors $
H_{w(\alpha)}(\Lambda(\alpha),\Gamma)f^{(j)}$ apply similarly to the
vectors $H_v(\Gamma,\Lambda(\alpha))e^{(n)}$.

We start from the representation \eqref{toprove}, with no a priori
assumption on the points $\lambda_j$ and the nonnegative numbers
$w_j$ except the admissibility condition
\[ \sum_j \frac{w_j}{1+\lambda_j^2} <\infty; \]
our goal is to prove that the $\lambda_j$ are in $\Lambda(\alpha)$
and that the $w_j$ are given by \eqref{weight}. We first observe
that if we set $z=\lambda_j+iy$, then we get, by restricting to
imaginary parts,
\[ \frac{w_j}{y}\le \left(\sum_{n}
\frac{yv_n}{(\lambda_j-\gamma_n)^2+y^2}\right)^{-1}, \] whence \[
\sum_{n} \frac{v_n}{(\lambda_j-\gamma_n)^2}\le w_j^{-1}. \] In other
words, the points $\lambda_j$ belong to $(\Gamma, v)^{*}$. We now
multiply each side of \eqref{toprove} by $z-\lambda_j$ and take the
limit when $z=\lambda_j+iy$ and $y\to 0^+$; since $\lambda_j$ is in
$(\Gamma,v)^{*}$ and $\varphi(\lambda_j)=\alpha$, this gives
\eqref{weight}.

Suppose, on the other hand, that $(\alpha-\varphi(z))^{-1}$ is not a
purely atomic Herglotz function and that the vectors
$H_v(\Gamma,\Lambda(\alpha))e^{(n)}$ constitute an orthonormal
system in $\ell^2_{w(\alpha)}$. We will show that this leads to a
contradiction. To begin with, our assumption on
$(\alpha-\varphi(z))^{-1}$ implies that
\begin{equation} \frac{1}{\alpha-\varphi(z)}=b+ \sum_jw_j
\left(\frac{1}{\lambda_j-z}-\frac{\lambda_j}{1+\lambda_j^2}\right) +
cz + \int_{-\infty}^\infty
\left(\frac{1}{t-z}-\frac{t}{1+t^2}\right)d\mu(t), \label{nonatom}
\end{equation} with $\mu$ a spectral measure such that
$\mu(\{\lambda_j\})=0$ for every $j$ and not both $c=0$ and $\mu=0$;
the fact that the $w_j$ are given by \eqref{weight} can be proved as
in the first part of the proof.

We now argue in the same way as above, reversing the roles of
$\Gamma$ and $\Lambda(\alpha)$. This means that we first show, by
again restricting to imaginary parts, that
\[ \sum_j \frac{w_j}{(\gamma_n-\lambda_j)^2} +\int_{-\infty}^\infty
\frac{d\mu(t)}{(\gamma_n-t)^2}\le v_n^{-1} \] for every $n$. We
infer from this that both the sum and the integral on the right-hand
side of \eqref{nonatom} converge absolutely for $z=\gamma_n$.
Indeed, the right-hand side of \eqref{nonatom} vanishes for
$z=\gamma_n$, and so if we put $z=\gamma_n+i\delta$ in
\eqref{nonatom}, divide each side by $iy$, and let $y$ tend to $0$,
we get
\[ v_n^{-1}=\sum_j
\frac{w_j}{(\gamma_n-\lambda_j)^2}+\int_{-\infty}^\infty
\frac{d\mu(t)}{(\gamma_n-t)^2}.\] Since we should have $\|
H_v(\Gamma,\Lambda(\alpha))e^{(n)}\|_{w(\alpha)}=1$, we have reached
a contradiction unless $\mu=0$. On the other hand, if $\mu=0$ and
$c>0$, then we also reach a contradiction because the condition for
orthogonality of the vectors $H_v(\Gamma,\Lambda(\alpha))e^{(n)}$
becomes
\[ \sum_j\left(
\frac{w_j}{\gamma_m-\lambda_j}-\frac{w_j}{\gamma_n-\lambda_j}\right)=0\]
for $m\neq n$, and this is inconsistent with the right-hand side of
\eqref{nonatom} being $0$ whenever $z=\gamma_n$.
\end{proof}

A few remarks are in order. First, it should be noted that we may
have $(\Gamma,v)^*\cap {\Bbb R}=\emptyset$ even if $(\Gamma,v)$ is
an admissible pair. The following is an example. Pick a sequence of
distinct prime numbers $p_l$ such that
\[ \sum_l p_l^{-1/2}<\infty. \]
Set $\Gamma=\bigcup_l p_l^{-1}{\Bbb Z}\setminus {\Bbb Z}$, and equip
$\Gamma$ with the weight sequence $v$ obtained by placing a weight
of magnitude $p_l^{-3/2}$ at every point of the sequence
$p_l^{-1}{\Bbb Z}\setminus {\Bbb Z}$.

On the other hand, if $\Gamma$ is a discrete subset of the real
line, then $H_v(\Gamma,\Lambda(\alpha)): \ell^2_v\to
\ell^2_{w(\alpha)}$ is unitary for every $\alpha$ with one possible
exception: It fails to be unitary when \[  \sum_{n} v_n <\infty \ \
\text{and} \ \ \alpha=\sum_{n} \frac{v_n\gamma_n}{1+\gamma_n^2}.
\] This statement follows almost immediately from
Theorem~\ref{theorem2}. We get the exceptional case because the
constant $c$ in the representation \eqref{nonatom} is obtained as
\[ c=\lim_{y\to \infty} \frac{1}{i y (\alpha-\varphi(i y)) }.\]

If $\Gamma$ is a subset of the unit circle, then the potential
\eqref{phi} should be replaced by \begin{equation} \label{clark}
\varphi(z)= \frac{i}{2}\sum_{n} v_n
\frac{\gamma_n+z}{\gamma_n-z};\end{equation} the analysis goes
through in the same way, and we obtain a statement completely
analogous to Theorem~\ref{theorem2}. Note, however, that for
discrete sets $\Gamma$ on the unit circle, there will be no
exceptional value for $\alpha$ because there is no linear term
`$cz$' in the general representation of a Herglotz function. Indeed,
a Herglotz function $\psi$ in the unit disk is of the form
\[
\psi(z)=b+ \int_{0}^{2\pi} \frac{e^{it}+z}{e^{it}-z} d\mu(t),
\] where $b$ is a real constant and $\mu$
a nonnegative measure on the circle.

Finally, as will be seen in the last section of this paper, the
unitary transformations obtained from Theorem~\ref{theorem2} (and
its counterpart for the unit circle) correspond precisely to Clark's
orthonormal bases \cite{CL}. From this point of view,
Theorem~\ref{theorem2} is essentially a reformulation of Clark's
theorem.

\section{Orthogonal bases of reproducing kernels}

Let $\Hg$ be a Hilbert space of complex-valued functions defined on
some set $\Omega$ in $\CC$. To begin with, we assume that $\Hg$
satisfies the following two axioms:
\begin{itemize}
\item[(A1)]If $f$ is in $\Hg$ and $f(\lambda)=0$ for some point $\lambda$ in
$\Omega$, then we may write $f(z)=f_0(z)(z-\lambda)$ with $f_0$ a
function also belonging to $\Hg$.
\item[(A2)] $\Hg$ has a
reproducing kernel $\kappa_{\lambda}$ at every point $\lambda$ in
$\Omega$.
\end{itemize}
We wish to describe those spaces $\Hg$ which admit orthogonal bases
of reproducing kernels. To avoid trivialities, we assume that the
dimension of $\Hg$ is at least 2. We note that this family of spaces
is part of the much larger family of spaces $\Hg$ that admit Riesz
bases of normalized reproducing kernels. Since each space of the
latter kind can be equipped with an equivalent norm such that one of
the Riesz bases becomes an orthonormal basis, the question of
interest is when a space $\Hg$ has more than one orthogonal basis of
reproducing kernels. It is therefore reasonable to introduce a third
axiom:
\begin{itemize}
\item[(A3)]
There exists a sequence of distinct points $\Gamma=(\gamma_n)$ in
$\Omega$ such that the sequence of normalized reproducing kernels
$\big(\kappa_{\gamma_n}/\|\kappa_{\gamma_n}\|_{\Hg}\big)$
constitutes a Riesz basis for $\Hg$. In addition, there is at least
one point $\lambda$ in $\Omega\setminus \Gamma$ for which
$\kappa_\lambda\neq 0$.
\end{itemize}

The Riesz basis $(\kappa_{\gamma_n}/\|\kappa_{\gamma_n}\|_{\Hg})$
has a biorthogonal basis, which we will call $(g_n)$. Thus
$g_n(\gamma_m)=0$ whenever $m\neq n$. We fix an index $n_0$ and set
$E(z)=g_{n_0}(z)(z-\gamma_{n_0})$. It follows from axiom (A1) that
$E(z)/(z-\gamma_n)=g_{n_0}(z)+(\gamma_n-\gamma_{n_0})g_{n_0}(z)/(z-\gamma_n)$
also belongs to $\Hg$. We use the suggestive notation $E'(\gamma_n)$
for the value of this function at $\gamma_n$. We have
$E'(\gamma_n)\neq 0$ because otherwise $E(z)/(z-\gamma_n)$ would be
identically $0$, which can only happen if all functions in $\Hg$
vanish at every point in $\Omega\setminus \Gamma$; this would
contradict the last part of (A3). By uniqueness of the biorthogonal
sequence $(g_n)$, we now have
\[ g_n(z)=\frac{E(z)}{E'(\gamma_n)(z-\gamma_n)} \]
for every $n$. We call $E$, which is unique up to a multiplicative
constant, the generating function for $\Gamma=(\gamma_n)$. We may
assume that $E$ does not vanish at any point $\lambda$ in
$\Omega\setminus\Gamma$, because then $E(z)/(z-\lambda)$ would be a
vector in $\Hg$ vanishing at every point in $\Gamma$. Hence
$E(z)/(z-\lambda)$ would be identically $0$, which again would be in
contradiction with the second part of (A3).

The sequence $g_n$ is also a Riesz basis for $\Hg$, and therefore
every vector $h$ in $\Hg$ can be written as
\begin{equation} h(z)=\sum_{n}
h(\gamma_n)\frac{E(z)}{E'(\gamma_n)(z-\gamma_n)},
\label{lagrange}\end{equation} where the sum converges with respect
to the norm of $\Hg$ and
\[ \|h\|_{\Hg}^2\simeq \sum_n \frac{|h(\gamma_n)|^2}{\|\kappa_{\gamma_n}\|_{\Hg}^2}
<\infty; \] since point evaluation at every point $z$ is a bounded
linear functional, \eqref{lagrange} also converges pointwise in
$\Omega$. By the assumption that $h\mapsto
(h(\gamma_n)/\|\kappa_{\gamma_n}\|_{\Hg})$ is a bijective map from
$\Hg$ to $\ell^2$, this means that \begin{equation} \sum_n
\frac{\|\kappa_{\gamma_n}\|_{\Hg}^2}{|E'(\gamma_n)|^2|z-\gamma_n|^2}<\infty
\label{pointwise}
\end{equation}
whenever $z$ is in $\Omega\setminus \Gamma$. We see that the
generating function $E$ appears as a common factor in
\eqref{lagrange}. Since $E(z)\neq 0$ for $z$ in
$\Omega\setminus\Gamma$, the function $E$ can be divided out.

We set
\[v_n =\frac{\|\kappa_{\gamma_n}\|_{\Hg}^2}{|E'(\gamma_n)|^2}\]
and observe that since $\Omega\setminus \Gamma$ is assumed to be
nonempty, \eqref{pointwise} implies that
\begin{equation} \sum_n \frac{v_n}{1+|\gamma_n|^2}<\infty.
\label{adm}
\end{equation}
We may now change our viewpoint: Given a sequence of distinct
complex numbers $\Gamma=(\gamma_n)$ and a weight sequence $v=(v_n)$
that satisfy the admissibility condition \eqref{adm}, we introduce
the space $\Hg(\Gamma,v)$ consisting of all functions
\[ f(z)=\sum_n \frac{a_n v_n}{z-\gamma_n} \]
for which
\[ \|f\|_{\Hg(\Gamma,v)}^2=\sum_n |a_n|^2v_n <\infty,\]
assuming that the set $(\Gamma,v)^*$ is nonempty. The reproducing
kernel of $\Hg(\Gamma, w)$ at a point $z$ in $(\Gamma,v)^{*}$ is
\[ k_{z}(\zeta)=\sum_{n}
\frac{v_n}{(\overline{z}-\overline{\gamma_n})(\zeta-\gamma_n)}. \]
If $\Lambda=(\lambda_j)$ is a sequence in $(\Gamma,v)^*$, then we
associate with it the weight sequence $w=(w_j)$, where
\[ w_j=\| k_{\lambda_j}\|_{\Hg(\Gamma,v)}^{-2}=\left(\sum_n
\frac{v_n}{|\lambda_j-\gamma_n|^2}\right)^{-1}. \] Consequently,
$(k_{\lambda_j}/\|k_{\lambda_j}\|_{\Hg(\Gamma,v)})$ is an
orthonormal basis for $\Hg(\Gamma,v)$ if and only if
$H_v(\Gamma,\Lambda):\ell^2_v\to\ell^2_w$ is a unitary
transformation. Thus from the two previous sections we conclude:

If the space $\Hg(\Gamma,v)$ has an orthogonal basis of reproducing
kernels, then $\Gamma$ is a subset of a straight line or a circle.
Moreover, when $\Gamma$ is a subset of the real line, the orthogonal
bases of reproducing kernels for $\Hg(\Gamma,w)$ are obtained from
the unitary transformations described by Theorem~\ref{theorem2}; an
analogous result holds when $\Gamma$ is a subset of the unit circle.

\section{Relation to Clark's bases}

We are now finally prepared to point out the correspondence between
our description of unitary discrete Hilbert transforms and the bases
studied by de Branges \cite{DB} and Clark \cite{CL}. We restrict to
Clark's bases; the only difference between the two cases is that
Clark considered the case of the unit circle while de Branges worked
on the real line with, in our terminology, $|\gamma_n|\to \infty$.

Suppose $\varphi$ is of the form \eqref{clark} with
$\Gamma=(\gamma_n)$ a sequence of distinct points on the unit
circle. Then the function
\[ I(z)=\frac{\varphi(z)-i}{\varphi(z)+i} \]
is an inner function in the open unit disk ${\Bbb D}$. We associate
with $I$ the so-called model space $K_I^2=H^2\ominus I H^2$, which
is the orthogonal complement to the shift-invariant subspace $I H^2$
of the Hardy space $H^2$ of the unit disk. Since
$1/(1-\overline{\zeta}z)$ is the reproducing kernel for $H^2$ at a
point $\zeta$ in ${\Bbb D}$, the reproducing kernel for $K^2_I$ at
the same point $\zeta$ is
\[ \kappa_{\zeta}(z)=\frac{1-\overline{I(\zeta)}
I(z)}{1-\overline{\zeta} z}. \] This formula extends to each point
on the unit circle at which every function in $K^2_I$ has a radial
limit whose modulus is bounded by a constant times the $H^2$ norm of
the function.

A computation shows that
\[ i \frac{1+I(z)}{1-I(z)}=\varphi(z) \]
which according to Clark's theorem means that the reproducing
kernels
\[ \kappa_{\gamma_n}(z)=\frac{1-I(z)}{1-\overline{\gamma_n}z}\]
constitute an orthogonal basis for $K^2_I$. In fact, Clark's theorem
says that if $\beta$ is a point on the unit circle and the spectral
measure of the Herglotz function
\[  \varphi_\beta(z)=i \frac{\beta+I(z)}{\beta-I(z)} \]
is purely atomic, then the reproducing kernels associated with the
spectrum of $\varphi_\beta$ also constitute an orthogonal basis for
$K^2_I$. The spectral measures generated in this way correspond
precisely to the spectral measures of the functions
$1/(\alpha-\varphi(z))$ with $\alpha$ any real number.

Having observed this correspondence, we conclude that a Hilbert
space $\Hg$ of the type considered in the previous section can have
more than one orthogonal basis of reproducing kernels only if $\Hg$
is, up to trivial modifications, a model space $K^2_I$ either in the
unit disk or in the upper half-plane.\footnote{It may be objected
that we fall short of characterizing all Clark bases because there
exist model subspaces that do not satisfy axiom (A1) above. However,
axiom (A1) can be relaxed in an appropriate manner so that this
shortcoming is removed; we refer to our recent paper \cite{BMS} for
details.}

An additional wonder, which can be seen from Clark's theorem or
indeed by a straightforward computation, is that the norm in $\Hg$
can always be computed as an $L^2$ integral over a circle or a
straight line.


\begin{thebibliography}{BRSHZE}

\bibitem{BMS} Y. Belov, Tesfa Y. Mengestie, and K. Seip,
\emph{Discrete Hilbert transforms on sparse sequences},
arXiv:0912.2899v1, 2009.
\bibitem{CY}  J. Cima, A. Matheson, and W. Ross,
\emph{The Cauchy Transform}, Math. Surveys Monogr. \textbf{125},
Amer. Math. Soc., Providence, RI, 2006.
\bibitem{CL}  D. N. Clark, \emph{One dimensional perturbations of restricted
shifts,} J. Analyse Math. \textbf{25} (1972), 169--191.
\bibitem{DB} L.  de Branges, \emph{Hilbert Spaces of Entire
Functions,} Prentice--Hall, Englewood Cliffs, 1968.




\end{thebibliography}
\end{document}